\newtheorem{theorem}{Theorem}[section]
\newtheorem{corollary}[theorem]{Corollary}
\newtheorem{lemma}[theorem]{Lemma}
\newtheorem{proposition}[theorem]{Proposition}
\theoremstyle{definition}
\newtheorem{definition}[theorem]{Definition}
\newtheorem{example}[theorem]{Example}
\newtheorem{remark}[theorem]{Remark}
\title{Antichains and counterpoint dichotomies}
\author{Octavio A. Agust\'{\i}n-Aquino}
\thanks{The author was supported by a CONACyT grant, with fellow number 207309.}
\address{Universidad de la Ca\~nada}
\email{octavioalberto@unca.edu.mx}
\begin{document}
\subjclass[2000]{00A65, 05A20, 05E18, 05D05, 11Z05}
\keywords{complement-free antichain, counterpoint dichotomy}
\begin{abstract}
We construct a special type of antichain (i. e., a family of subsets
of a set, such that no subset is contained in another) using group-theoretical
considerations, and obtain an upper bound on the cardinality of such an antichain.
We apply the result to bound the number of strong counterpoint
dichotomies up to affine isomorphisms.
\end{abstract}
\maketitle

\section{Introduction}

Sperner systems, antichains or clutters (families of subsets of an $n$-set such that
no one is contained in another) have been important combinatorial objects since
the foundational theorem of Sperner was published, which states that they cannot
contain more than $\binom{n}{\lfloor n/2\rfloor}$ elements. Concerning their
size, it is possible to establish upper bounds on their cardinality using
elegant elementary methods and any additional properties they may have. Sometimes it is even 
possible to exhibit maximal antichains that attain such bounds.

In particular, some antichains can be studied from the standpoint of group theory as in \cite{EFK92},
to obtain bounds (in the spirit of the Erd\H{o}s-Ko-Rado theorem) that rely on the
isotropy groups of the members of the antichain. However, particular
types of antichains that are of interest for mathematical musicology 
(namely, pairwise non-isomorphic counterpoint dichotomies) are not encompassed
in such an approach, for the isotropy group of their members is trivial.

In Section 2 we define such antichains in a slightly more general
setting than the original formulation given in \cite[Part VII]{gM02}, and we obtain a somewhat crude upper bound for
their cardinality. Then, in Section 3, we apply the result
to bound the number of non-isomorphic counterpoint dichotomies.

\section{Strong antichains}

Let $G$ be a subgroup of the symmetric group of $S$. The group $G$
acts obviously on $S$; it also acts on the powerset $2^{S}$ of $S$, with an action defined by
\begin{align*}
 \cdot:G\times 2^{S}&\to 2^{S},\\
 (g,A) &\mapsto gA:=\{ga:a\in A\}.
\end{align*}

Two sets $A,B\in 2^{S}$ are said to be \emph{isomorphic} if there exists $g\in G$
such that $A=gB$. In this paper we will always consider a set $S$ of even cardinality and the subset
\[
 \binom{S}{|S|/2}:=\{A\subseteq S:|A|=|S|/2\},
\]
of its power set, whose elements will be called \emph{dichotomies}.

\begin{definition}
The dichotomy $D\in \binom{S}{|S|/2}$ is:
\begin{enumerate}
\item $G$-\emph{rigid} when $gD=D$ for $g\in G$ implies $g=e$,
\item $G$-\emph{autocomplementary} if there exists a $p\in G$ (called \emph{autocomplementarity function})
such that
\[
 pD=\complement D:=S\setminus D,
\]
\item $G$-\emph{strong} if it is $G$-rigid and $G$-autocomplementary.
\end{enumerate}
\end{definition}

\begin{remark}\hspace{1em}
\begin{enumerate}
\item If a dichotomy $D\in \binom{S}{|S|/2}$ is $G$-strong and $p\in G$ is such that $pD=\complement D$,
then $p$ is unique. We call it the \emph{polarity} of $D$. For any $g\in G$, the set $gD$ is also $G$-strong (with
polarity $gpg^{-1}$).
\item Since $S$ is finite, any polarity is an involution that does not fix points of $S$,
i. e., it is an involutive derangement.
\end{enumerate}
\end{remark}

\begin{definition}
An involutive derangement $q\in G$ is called a \emph{quasipolarity}.
\end{definition}

Since any quasipolarity $q$ decomposes only in $2$-cycles, we can construct a dichotomy
$U_{q}\in \binom{S}{|S|/2}$ that contains exactly one element from each cycle.
It is clear that $U_{q}$ is autocomplementary and $qU_{q}=\complement U_{q}$.

Consider now the set of dichotomies whose autocomplementarity function is $q$, which is
\begin{align*}
M_{q} &=\left\{D\in\binom{S}{|S|/2}:qD=S\setminus D\right\}\\
&=\{A\cup B: A\subseteq U_{q}, B=(S\setminus U_{q})\setminus qA\},
\end{align*}
where the last equality follows from
\[
 qA = (S\setminus U_{q})\setminus A\quad\text{and}\quad q((S\setminus U_{q})\setminus qA) = U_{q}\setminus A.
\]

We see that $|M_{q}| = 2^{|U_{q}|} = 2^{|S|/2}$.

\begin{lemma}\label{L:AccPol1}
If $g\in G_{q}$, then $gM_{q}=M_{q}$.
\end{lemma}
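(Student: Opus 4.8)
The plan is to read $G_q$ as the centralizer $C_G(q)=\{g\in G: gq=qg\}$ of the quasipolarity $q$ in $G$, since this is precisely the condition under which conjugation fixes $q$, and by the first item of the Remark it is conjugation of the autocomplementarity function that governs how the group action moves the classes $M_q$. Concretely, if $D$ has autocomplementarity function $q$ then $gD$ has autocomplementarity function $gqg^{-1}$, so $gD$ lands back in $M_q$ exactly when $g$ commutes with $q$.

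To verify this directly I would take an arbitrary $D\in M_q$, so that $qD=S\setminus D$, and compute the effect of $q$ on $gD$. Using the relation $gq=qg$ coming from $g\in G_q$, one has $q(gD)=(qg)D=(gq)D=g(qD)=g(S\setminus D)$. The remaining step is to observe that $g$, being a permutation of $S$, preserves complementation, namely $g(S\setminus D)=gS\setminus gD=S\setminus gD$ because $gS=S$. Combining these computations gives $q(gD)=S\setminus gD$, that is, $gD\in M_q$, and hence $gM_q\subseteq M_q$.

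For the reverse inclusion I would note that $g^{-1}\in G_q$ as well, since commuting with $q$ is preserved under taking inverses, so the identical argument yields $g^{-1}M_q\subseteq M_q$, whence $M_q=g(g^{-1}M_q)\subseteq gM_q$ and equality follows. Alternatively, since $g$ acts injectively on the finite set $\binom{S}{|S|/2}$, the inclusion $gM_q\subseteq M_q$ together with $|gM_q|=|M_q|$ already forces $gM_q=M_q$.

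I do not anticipate a genuine obstacle: the whole content reduces to the two elementary facts that the $G$-action commutes with complementation and that membership in $M_q$ is controlled by conjugation of $q$. The only point requiring care is making explicit that $G_q$ denotes the centralizer of $q$, so that the hypothesis $g\in G_q$ is exactly the commutation relation $gq=qg$ invoked in the central computation.
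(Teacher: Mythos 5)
Your proof is correct and follows essentially the same route as the paper: interpret $g\in G_q$ as the commutation relation $gq=qg$ and compute $q(gD)=g(qD)=g(S\setminus D)=S\setminus gD$. You are in fact slightly more careful than the paper in closing the argument from $gM_q\subseteq M_q$ to equality (via $g^{-1}\in G_q$ or a cardinality count), a step the paper leaves implicit.
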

\begin{proof}
Since $g\in G_{q}$ is equivalent to $gq = qg$, we have
\[
q(gD)= g(qD) = g(S\setminus D) = S\setminus gD,
\]
which means that $gD\in M_{q}$, for $|gD|=S/2$.
\end{proof}

\begin{lemma}\label{L:AccPol2}
If $h\in G$, then $M_{hqh^{-1}}=hM_{q}$.
\end{lemma}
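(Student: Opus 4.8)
The plan is to establish the equality by proving the two inclusions $hM_{q}\subseteq M_{hqh^{-1}}$ and $M_{hqh^{-1}}\subseteq hM_{q}$, obtaining the second from the first by a symmetry argument. Before doing so I would record that $hqh^{-1}$ is itself a quasipolarity whenever $q$ is one, so that $M_{hqh^{-1}}$ is defined and the statement makes sense: conjugation preserves the relation $q^{2}=e$, and if $hqh^{-1}$ fixed some $x\in S$, then $q$ would fix $h^{-1}x$, contradicting that $q$ is a derangement.

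For the forward inclusion I would take an arbitrary $D\in M_{q}$, so that $qD=S\setminus D$, and check that $hD$ lies in $M_{hqh^{-1}}$. Since $h\in G$ is a bijection of $S$, we have $|hD|=|D|=|S|/2$, so $hD$ is a dichotomy, and the key computation is
\[
(hqh^{-1})(hD)=hq(h^{-1}h)D=hqD=h(S\setminus D)=S\setminus hD,
\]
where the last equality uses that $h$ permutes $S$ (so that $hS=S$) and therefore carries complements to complements. This shows $hD\in M_{hqh^{-1}}$, hence $hM_{q}\subseteq M_{hqh^{-1}}$.

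For the reverse inclusion I would invoke the forward inclusion just proved, but with $h^{-1}$ in place of $h$ and $hqh^{-1}$ in place of $q$. This yields
\[
h^{-1}M_{hqh^{-1}}\subseteq M_{h^{-1}(hqh^{-1})h}=M_{q},
\]
and applying $h$ to both sides gives $M_{hqh^{-1}}\subseteq hM_{q}$. Combining the two inclusions then finishes the proof.

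The argument is essentially formal, so I do not expect a serious obstacle; the only point that genuinely uses the hypotheses is the complement-preservation identity $h(S\setminus D)=S\setminus hD$, which holds precisely because $h$ is a bijection of the whole set $S$. Everything else is a routine manipulation of the group action, and the symmetry trick avoids repeating the computation for the reverse inclusion.
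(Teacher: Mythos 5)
Your proof is correct, and the forward inclusion $hM_{q}\subseteq M_{hqh^{-1}}$ is exactly the computation in the paper's own proof. The only difference is in the finishing step: the paper deduces the reverse inclusion from the fact that $|M_{q}|=2^{|S|/2}$ for every quasipolarity (so the two sides of an inclusion of equal-cardinality finite sets must coincide), whereas you obtain it by applying the forward inclusion with $h^{-1}$ in place of $h$ and $hqh^{-1}$ in place of $q$; both finishes are valid, and your symmetry argument has the mild advantage of not depending on the earlier count of $|M_{q}|$, while your preliminary observation that $hqh^{-1}$ is again a quasipolarity is a worthwhile sanity check the paper leaves implicit.
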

\begin{proof}
First, we have
\[
hM_{q} = \{hD:qD=S\setminus D\}.
\]

Now take any $hD\in hM_{q}$, then
\[
hqh^{-1}hD = hqD = h(S\setminus D) = S\setminus hD,
\]
thus $hD\in M_{hqh^{-1}}$ and $hM_{q}\subseteq M_{hqh^{-1}}$. Since both sets in the inclusion have the
same cardinality, the lemma follows.
\end{proof}


Let $\mathcal{T}$ be a traversal of $\binom{S}{|S|/2}/G$ (i. e., a set consisting of exactly one
element from each $G$-orbit on $\binom{S}{|S|/2}/G$) and $\mathcal{R}_{G}\subseteq \mathcal{T}$
a subset such that all of its members are $G$-strong. For any $D_{1},D_{2}\in \mathcal{T}$, it is impossible
that $D_{1}\subseteq D_{2}$, for then $D_{1}=D_{2}$ and they would represent the same orbit.
Thus $\mathcal{T}$ and $\mathcal{R}_{G}$ are antichains. In particular, $\mathcal{R}_{G}$ is a \emph{complement-free} antichain, since
$D\in \mathcal{R}_{G}$ implies that $\complement D \notin \mathcal{R}_{G}$. Indeed,
if $D_{1}\cap D_{2} = \emptyset$, it would imply that $D_{1}=\complement D_{2}=pD_{2}$ for some $p\in G$,
hence $D_{1}$ would be in the orbit of $D_{2}$.

\begin{definition}
Let $G$ be a subgroup of the symmetric group of a finite set $S$, with $|S|$ even.
A $G$-\emph{strong antichain} is a subset $\mathcal{R}_{G}$ of a traversal $\mathcal{T}$ of the orbit set $\binom{S}{|S|/2}/G$,
such that all its members are $G$-strong.
\end{definition}

It is obvious that the cardinality of a $G$-strong antichain $\mathcal{R}_{G}$
is not greater than the number of $(|S|/2)$-subsets of $S$:
\[
 |\mathcal{R}_{G}|\leq \binom{|S|}{|S|/2}.
\]

Being $\mathcal{R}_{G}$ a complement-free antichain, a theorem by Purdy \cite[p. 139]{iA02} tells us that
\[
 |\mathcal{R}_{G}|\leq \binom{|S|}{|S|/2-1},
\]
and using the Erd\H{o}s-Ko-Rado theorem \cite[Theorem 1]{EKR61}, we can improve this slightly:
\[
 |\mathcal{R}_{G}|\leq \binom{|S|-1}{|S|/2-1}.
\]

These upper bounds, however, do not make full use of the structure of $\mathcal{R}_{G}$ derived from
the action of $G$ on $S$. In order to exploit it, note first that $G$-strong dichotomies have orbits
of maximum cardinality, namely $|G|$.

Let $G_{q}$ be the isotropy group of a quasipolarity $q\in G$ under the conjugation action. The number of non-isomorphic 
$G_{q}$-strong dichotomies for a given quasipolarity $q$ is bounded by
\begin{equation}\label{E:CotaBurdap}
|\mathcal{R}_{G_{q}}| \leq \frac{|M_{q}|}{|G_{q}|} = \frac{2^{|S|/2}}{|G_{q}|},
\end{equation}
since, by Lemmas \ref{L:AccPol1} and \ref{L:AccPol2}, the set $M_{q}$ of dichotomies whose autocomplementary
function is $q$ coincide with $M_{q'}$, for any conjugate $q'$ of $q$.

Every $G$-strong dichotomy is a $G_{q}$-strong dichotomy because $G_{q}$ is
a subgroup of $G$, hence $|\mathcal{R}_{G}|\leq \sum_{[q]\in\mathcal{Q}}|\mathcal{R}_{G_{q}}|$,
where $\mathcal{Q}$ is the set of conjugacy classes of the quasipolarities of $G$.
Now summing \eqref{E:CotaBurdap} over
$\mathcal{Q}$, we prove the following
result.

\begin{theorem}\label{T:Cota}
Let $G$ be a subgroup of the symmetric group of the set $S$
(where $|S|$ is an even number) and $\mathcal{R}_{G}$ be a $G$-strong antichain. Then
\begin{equation}\label{E:CotaBurda}
|\mathcal{R}_{G}| \leq \sum_{[q]\in \mathcal{Q}}\frac{2^{|S|/2}}{|G_{q}|},
\end{equation}
where $\mathcal{Q}$ is the set of conjugacy classes of quasipolarities of $G$.
\end{theorem}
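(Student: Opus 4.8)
The plan is to partition $\mathcal{R}_{G}$ according to the conjugacy class of the polarity attached to each member, and then to bound the size of each block by an orbit-counting argument carried out inside the sets $M_{q}$.

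First I would use the Remark to observe that every $D\in\mathcal{R}_{G}$, being $G$-strong, has a unique polarity $p_{D}$, and that $p_{D}$ is an involutive derangement, hence a quasipolarity; in particular $D\in M_{p_{D}}$. This lets me attach to each $D$ its class $[p_{D}]\in\mathcal{Q}$ and split $\mathcal{R}_{G}$ into the blocks $\mathcal{R}_{G}^{[q]}=\{D\in\mathcal{R}_{G}:[p_{D}]=[q]\}$, so that $|\mathcal{R}_{G}|=\sum_{[q]\in\mathcal{Q}}|\mathcal{R}_{G}^{[q]}|$. It then suffices to prove $|\mathcal{R}_{G}^{[q]}|\leq 2^{|S|/2}/|G_{q}|$ for a fixed representative $q$ of each class.

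Fix such a $q$. For $D\in\mathcal{R}_{G}^{[q]}$ I would write $p_{D}=hqh^{-1}$ with $h\in G$; then Lemma \ref{L:AccPol2} gives $D\in M_{hqh^{-1}}=hM_{q}$, so that $D':=h^{-1}D$ lies in $M_{q}$ and, being a conjugate of a $G$-strong dichotomy, is again $G$-strong with polarity $q$. By Lemma \ref{L:AccPol1} the group $G_{q}$ acts on $M_{q}$, and I would send $D$ to the $G_{q}$-orbit of $D'$. Since $D'$ is $G$-rigid and $G_{q}\leq G$, it is $G_{q}$-rigid, so its $G_{q}$-stabilizer is trivial and its orbit has exactly $|G_{q}|$ elements. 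The orbits arising this way are therefore pairwise disjoint subsets of $M_{q}$, each of full size $|G_{q}|$, and as $|M_{q}|=2^{|S|/2}$ there can be at most $2^{|S|/2}/|G_{q}|$ of them.

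The crux is to verify that the assignment $D\mapsto\text{(}G_{q}\text{-orbit of }D'\text{)}$ is injective on $\mathcal{R}_{G}^{[q]}$, which is exactly where the traversal property is used. Writing $p_{D_{i}}=h_{i}qh_{i}^{-1}$ and $D_{i}'=h_{i}^{-1}D_{i}$, suppose $D_{1}'$ and $D_{2}'$ lay in the same $G_{q}$-orbit, say $D_{1}'=gD_{2}'$ with $g\in G_{q}\subseteq G$. Then $D_{1}=(h_{1}gh_{2}^{-1})D_{2}$ exhibits $D_{1}$ and $D_{2}$ in a single $G$-orbit, and since $\mathcal{R}_{G}$ sits inside a traversal $\mathcal{T}$ this forces $D_{1}=D_{2}$. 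Hence $|\mathcal{R}_{G}^{[q]}|$ is bounded by the number of full-size $G_{q}$-orbits in $M_{q}$, namely $2^{|S|/2}/|G_{q}|$, and summing over $[q]\in\mathcal{Q}$ yields \eqref{E:CotaBurda}. I expect the main obstacle to be precisely this bookkeeping between the global $G$-orbits and the local $G_{q}$-orbits on $M_{q}$: one must make sure the conjugating elements $h_{i}$ do not cause distinct $G$-orbits to be identified after passing to $G_{q}$. The rigidity hypothesis is indispensable here, since it is what guarantees that the relevant orbits have the maximal size $|G_{q}|$ and hence that their number is controlled by $|M_{q}|/|G_{q}|$.
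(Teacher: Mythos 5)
Your proof is correct and follows essentially the same route as the paper: partition $\mathcal{R}_{G}$ by the conjugacy class of the polarity, use Lemmas \ref{L:AccPol1} and \ref{L:AccPol2} to move everything into a single $M_{q}$, and invoke rigidity to get free $G_{q}$-orbits of size $|G_{q}|$ inside $M_{q}$, giving the per-class bound $2^{|S|/2}/|G_{q}|$. Your write-up is in fact more explicit than the paper's (notably the injectivity bookkeeping between $G$-orbits and $G_{q}$-orbits, which the paper leaves implicit), but the underlying argument is the same.
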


\begin{remark}
Observe that if we have better estimations of the
number of quasipolarities of $G$ and of the sizes of their isotropy groups,
we can improve the bound of Theorem \ref{T:Cota}. But even with those
refinements, \eqref{E:CotaBurda} might be far from optimal.
\end{remark}

\begin{example} If $|S|=2k>2$ and $G=\mathrm{Sym}(S)$ (the symmetric group), there are no $\mathrm{Sym}(S)$-strong dichotomies.
Considering that the number of conjugacy classes of quasipolarities in this case is
$1$ and the cardinality of the isotropy group of any quasipolarity $q$ satisfies
\[
|\mathrm{Sym}(S)_{q}|=2^{k}\cdot k!,
\]
we conclude using \eqref{E:CotaBurda} that
\[
|\mathcal{R}_{\mathrm{Sym}(S)}|\leq \frac{2^{k}}{2^{k}\cdot k!} = \frac{1}{k!},
\]
which is a nice confirmation.
\end{example}

\begin{example}
If $|S|=2k$ and $G=D_{2k}$ is the dihedral group of order $4k$, there are $k+1$ quasipolarities: $k$
reflections and one rotation. The reflections constitute a whole conjugacy class, each having isotropy groups of cardinality $2$, whilst
the rotation has $D_{2k}$ as its isotropy group. This, together with \eqref{E:CotaBurda},
gives us
\[
 |\mathcal{R}_{D_{2k}}| \leq \frac{2^{k}}{2}+\frac{2^k}{4k} = 2^{k-1}+\frac{2^{k-2}}{k}.
\]
\end{example}

\section{The case of counterpoint dichotomies}

As we stated in the introduction, strong $G$-antichains are of great interest for the mathematical
theory of counterpoint, for they represent alternative ways of understanding consonances and
dissonances (see \cite{oA09} and \cite{gM02}, for example). In that context, we take $S=\mathbb{Z}_{2k}$ and
$G$ is the semidirect product
\[
 G = \overrightarrow{GL}(\mathbb{Z}_{2k}):=\mathbb{Z}_{2k} \ltimes \mathbb{Z}_{2k}^{\times}
\]
where $\mathbb{Z}_{2k}^{\times}$ is the set of units of
$\mathbb{Z}_{2k}$. The elements of $G$ can be seen as
the \emph{affine} functions
\begin{align*}
 e^{u}v:\mathbb{Z}_{2k}&\to \mathbb{Z}_{2k},\\
 x&\mapsto vx+u,
\end{align*}
the element $u$ is the \emph{affine part} and $v$ is the \emph{linear part}. Note that
\[
 |\overrightarrow{GL}(\mathbb{Z}_{2k})| = 2k\varphi(2k)
\]
where $\varphi$ is the Euler totient function.

Our aim now is to characterize the quasipolarities
of $\overrightarrow{GL}(\mathbb{Z}_{2k})$ in terms of the
involutions of $\mathbb{Z}_{2k}^{\times}$. We will use this
information to obtain a bound on the number of strong $\overrightarrow{GL}(\mathbb{Z}_{2k})$-dichotomies
(which we will call \emph{counterpoint dichotomies}) up to isomorphism.

In order to state the characterization, for $v\in \mathbb{Z}_{2k}^{\times}$ define
\begin{align*}
 \sigma_{2k}(v) &:= \gcd(\nu-1,2k),\\
 \tau_{2k}(v) &:= \gcd(\nu+1,2k),
\end{align*}
where $\nu$ is taken as the minimum non-negative element in the residue class of $v$.

\begin{theorem}\label{T:CaracQ}
The function $e^{u} v \in \overrightarrow{GL}(\mathbb{Z}_{2k})$
is a quasipolarity if, and only if:
\begin{enumerate}
\item $v$ is an involution,
\item $2\frac{2k}{\tau_{2k}(v)} = \sigma_{2k}(v)$ and
\item $u = \sigma_{2k}(v)q+\frac{2k}{\tau_{2k}(v)}$ for some
integer $q$.
\end{enumerate}

If $k$ is odd, the second condition can be omitted.
\end{theorem}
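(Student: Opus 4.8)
The plan is to translate the two defining properties of a quasipolarity—being an involution and being a derangement—into arithmetic conditions on the pair $(u,v)$, and then to reconcile those conditions through a prime-by-prime (chiefly $2$-adic) comparison of $\sigma_{2k}(v)$ and $\tau_{2k}(v)$.

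First I would square $e^{u}v$. Since $e^{u}v$ acts by $x\mapsto vx+u$, composing gives $(e^{u}v)^{2}:x\mapsto v^{2}x+(v+1)u$, so $(e^{u}v)^{2}=e$ is equivalent to the pair of congruences $v^{2}\equiv 1\pmod{2k}$ and $(v+1)u\equiv 0\pmod{2k}$. The first is condition (1). Writing $\tau=\tau_{2k}(v)=\gcd(v+1,2k)$, the coprimality $\gcd\!\left(\tfrac{v+1}{\tau},\tfrac{2k}{\tau}\right)=1$ turns the second congruence into $\tfrac{2k}{\tau}\mid u$; that is, $u$ must be a multiple of $m:=\tfrac{2k}{\tau_{2k}(v)}$. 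Next I would treat the derangement condition: a fixed point of $e^{u}v$ is a solution of $(v-1)x\equiv -u\pmod{2k}$, which exists precisely when $\sigma:=\sigma_{2k}(v)=\gcd(v-1,2k)$ divides $u$. Hence $e^{u}v$ is fixed-point-free exactly when $\sigma\nmid u$. Combining the two steps, $e^{u}v$ is a quasipolarity if and only if $v^{2}\equiv 1$, $m\mid u$, and $\sigma\nmid u$; whether such a $u$ exists is therefore governed entirely by the comparison of $m$ with $\sigma$.

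The crux—and the step I expect to be the main obstacle—is a number-theoretic lemma that, under $v^{2}\equiv 1\pmod{2k}$, pins down $\sigma$ relative to $m$. Writing $\mathrm{ord}_{p}(n)$ for the exponent of $p$ in the factorization of $n$, I would argue as follows. For odd $p$, at most one of $v-1,v+1$ is divisible by $p$, and $v^{2}\equiv 1$ forces $\mathrm{ord}_{p}(2k)\le\mathrm{ord}_{p}(v-1)+\mathrm{ord}_{p}(v+1)$, from which $\mathrm{ord}_{p}(\sigma)+\mathrm{ord}_{p}(\tau)=\mathrm{ord}_{p}(2k)$. For $p=2$, since $v$ is odd the two exponents $\mathrm{ord}_{2}(v-1)$ and $\mathrm{ord}_{2}(v+1)$ equal $1$ and some $s\ge 2$ in some order, and a short case analysis against $a:=\mathrm{ord}_{2}(2k)$ (using $a\le 1+s$) shows $\mathrm{ord}_{2}(\sigma)+\mathrm{ord}_{2}(\tau)-a\in\{0,1\}$. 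Assembling the primes yields $m\mid\sigma$ with $\sigma/m=\sigma\tau/(2k)\in\{1,2\}$.

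It remains to read off the statement from this dichotomy. When $\sigma=m$, every multiple of $m$ is also a multiple of $\sigma$, so no admissible $u$ exists and this $v$ yields no quasipolarity. When $\sigma=2m$—which is precisely condition (2), since $2\tfrac{2k}{\tau}=\sigma$—the admissible $u$ are exactly the odd multiples of $m$, i.e. $u=\sigma q+\tfrac{2k}{\tau}$ for an integer $q$, which is condition (3); this gives the equivalence. Finally, if $k$ is odd then $a=\mathrm{ord}_{2}(2k)=1\le s$, so the case $\sigma/m=1$ cannot occur and $\sigma=2m$ holds automatically, which is exactly why condition (2) may then be omitted.
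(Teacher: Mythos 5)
Your proposal is correct, and the reduction is the same as the paper's: square $e^{u}v$ to get $v^{2}=1$ and $(v+1)u\equiv 0$, translate the latter into $\frac{2k}{\tau_{2k}(v)}\mid u$, and translate fixed-point-freeness into $\sigma_{2k}(v)\nmid u$. Where you diverge is in the central arithmetic fact. The paper proves $\frac{2k}{\tau_{2k}(v)}\mid\sigma_{2k}(v)$ by a Bezout computation (Lemma \ref{L:Div}), and then establishes $\sigma_{2k}(v)=2\frac{2k}{\tau_{2k}(v)}$ by writing $\nu-1=2\lambda$, $\nu+1=2\mu$ with $\gcd(\lambda,\mu)=1$ and evaluating $\gcd(\lambda,k)\gcd(\mu,k)=\gcd(\lambda\mu,k)$, splitting into the cases $k$ odd and ($k$ even with $u_{0}<\sigma_{2k}(v)$). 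Your $p$-adic valuation argument instead delivers the unconditional dichotomy $\sigma_{2k}(v)\tau_{2k}(v)/(2k)\in\{1,2\}$ in one pass, from which both directions of the equivalence, the nonexistence case (Proposition \ref{P:Nohay} in the paper), and the ``$k$ odd'' addendum all fall out uniformly; the paper's version reaches the same conclusion but has to assemble it from two separate lemmas and a parity case split. The one point to handle with care in your write-up is the degenerate representative $\nu=1$ (so $\nu-1=0$ and $\sigma_{2k}(v)=2k$), where your valuation bookkeeping needs the convention $\mathrm{ord}_{2}(0)=\infty$; with that convention the argument goes through, and your conclusion correctly recovers the translation $e^{k}$ as the unique quasipolarity with linear part $1$.
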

Before proving the theorem, let us make a series of remarks. Let
$g=e^{u}v \in \overrightarrow{GL}(\mathbb{Z}_{2k})$. The function
$g$ is an affine involution if, and only if,
\begin{align}
v^{2} &= 1,\label{E:Cond1}\\
u(v+1)&=0.\label{E:Cond2}
\end{align}

For $g$ to be a derangement, it is necessary and sufficient that the
equation
\begin{equation}\label{E:Cond3}
 (v-1)x = -u
\end{equation}
has no solutions. It is easy to see that the only values of $u$ that satisfy \eqref{E:Cond2} are the multiples of
\[
 u_{0} = \frac{2k}{\tau_{2k}(v)}.
\]

\begin{lemma}\label{L:Div}
Let $v\in \mathbb{Z}_{2k}^{\times}$ be an involution. Then
\[
 u_{0}=\frac{2k}{\tau_{2k}(v)}\quad \text{divides}\quad \sigma_{2k}(v).
\]
\end{lemma}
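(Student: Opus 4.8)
The plan is to reduce the divisibility statement to a prime-by-prime comparison of exponents. Writing $n=2k$ and abbreviating $\sigma=\sigma_{2k}(v)=\gcd(\nu-1,2k)$ and $\tau=\tau_{2k}(v)=\gcd(\nu+1,2k)$, I would first note that, since $\tau\mid n$, the assertion $\frac{n}{\tau}\mid\sigma$ is equivalent to $n\mid\sigma\tau$. So it suffices to show that for every prime $p$ the exponent of $p$ dividing $\sigma\tau$ is at least the exponent of $p$ dividing $n$.

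The one genuine input is that $v$ is an involution: condition \eqref{E:Cond1} gives $v^{2}\equiv 1\pmod{n}$, that is, $n\mid (v-1)(v+1)$. Fix a prime $p$ and let $a$, $x$, $y$ be the exponents of $p$ in $n$, $v-1$, $v+1$, respectively. The divisibility just recorded yields $x+y\geq a$. On the other hand, because $\sigma=\gcd(v-1,n)$, the exponent of $p$ in $\sigma$ is exactly $\min(x,a)$, and likewise the exponent of $p$ in $\tau$ is $\min(y,a)$. The whole computation then collapses to the elementary inequality $\min(x,a)+\min(y,a)\geq\min(x+y,a)$, valid for nonnegative integers, whose right-hand side equals $a$ since $x+y\geq a$. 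This shows the exponent of $p$ in $\sigma\tau$ is at least $a$ for every $p$, hence $n\mid\sigma\tau$, which is the claim.

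I expect the point requiring care to be the prime $p=2$. Since $v$ is a unit modulo $2k$ it is odd, so both $v-1$ and $v+1$ are even and the two gcds $\gcd(v-1,n)$ and $\gcd(v+1,n)$ genuinely share the prime $2$; one therefore cannot argue, as one can for odd primes (where $\gcd(v-1,v+1)\mid 2$ forces at most one factor to carry $p$), that the gcds are coprime and simply multiply out. The valuation inequality above is designed to avoid any coprimality hypothesis, which is precisely why I would route the argument through $\min(x,a)+\min(y,a)\geq\min(x+y,a)$ rather than through a multiplicativity-of-gcd argument; with that inequality in hand the prime $2$ is handled on exactly the same footing as the odd primes.
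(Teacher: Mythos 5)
Your proof is correct, and it reaches the paper's key intermediate claim by a different technique. Both arguments reduce the statement to showing $2k\mid\sigma_{2k}(v)\tau_{2k}(v)$, using as the only arithmetic input the congruence $\nu^{2}\equiv 1\pmod{2k}$. The paper gets there by writing $\sigma_{2k}(v)=a(\nu-1)+2kb$ and $\tau_{2k}(v)=c(\nu+1)+2kd$ via Bezout, multiplying out, and observing that every term of the product is a multiple of $2k$ once $\nu^{2}-1=2km$ is substituted. You instead argue prime by prime: with $a=v_{p}(2k)$, $x=v_{p}(\nu-1)$, $y=v_{p}(\nu+1)$, the identities $v_{p}(\sigma)=\min(x,a)$, $v_{p}(\tau)=\min(y,a)$ together with $x+y\geq a$ and the inequality $\min(x,a)+\min(y,a)\geq\min(x+y,a)$ give the conclusion. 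In effect you prove the general fact $\gcd(ab,n)\mid\gcd(a,n)\gcd(b,n)$ specialized to $a=\nu-1$, $b=\nu+1$; the Bezout computation establishes the same fact implicitly. Your route makes explicit why no coprimality of $\sigma_{2k}(v)$ and $\tau_{2k}(v)$ is needed (a genuine concern, since both are even), whereas the paper's computation is shorter and sidesteps the case analysis at $p=2$ without comment. Both are complete proofs.
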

\begin{proof}
By Bezout's identity, there are integers $a,b,c,d$ such that
\[
 \sigma_{2k}(v) = a(\nu-1)+2kb, \quad \tau_{2k}(v) = c(\nu+1)+2kd.
\]

We have the product
\[
 \sigma_{2k}(v)\tau_{2k}(v) = ac(\nu^{2}-1)+2k(\nu(bc+ad)+bc-ad+2kbd).
\]

Since $v^{2}-1=0$, there exists an integer $m$ such that $\nu^{2}-1=2km$, hence
\[
 \sigma_{2k}(v)\tau_{2k}(v) = 2k(acm+\nu(bc+ad)+bc-ad+2kbd)
\]
and the result is immediate.
\end{proof}

Equation \eqref{E:Cond3} does not hold if, and only if,
$\sigma_{2k}(v)=\gcd(\nu-1,n)$ does not divide $-u$ (or, equivalently,
it does not divide $u$).

By the division algorithm, $u$ must be a multiple of $u_{0}$ of the form
\begin{equation}\label{E:PQND}
 u=\sigma_{2k}(v) q + r, \quad 0<r<\sigma_{2k}(v), q\in \mathbb{Z}
\end{equation}

From this and Lemma \ref{L:Div} we deduce that $r$
must also be a multiple of $u_{0}$. It is clear that $u_{0}\leq \sigma_{2k}(v)$.

\begin{proposition}\label{P:Nohay}
If $u_{0}=\frac{2k}{\tau_{2k}(v)}=\sigma_{2k}(v)$, there
are no quasipolarities with $v$ as its linear part.
\end{proposition}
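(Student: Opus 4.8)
The plan is to show that, under the hypothesis $u_{0}=\sigma_{2k}(v)$, the involutivity requirement and the derangement requirement on the affine part $u$ become mutually exclusive, so that no admissible $u$ exists. All the necessary ingredients are already assembled in the remarks preceding the proposition; the argument amounts to a short observation on which residues of $u$ survive both conditions.

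First I would recall the two translated conditions. By \eqref{E:Cond2}, the map $g=e^{u}v$ is an involution only if $u$ is a multiple of $u_{0}=\frac{2k}{\tau_{2k}(v)}$. On the other hand, $g$ is a derangement exactly when \eqref{E:Cond3} has no solution, which by the remark following Lemma \ref{L:Div} happens precisely when $\sigma_{2k}(v)$ does \emph{not} divide $u$. Writing $u$ in the form \eqref{E:PQND}, namely $u=\sigma_{2k}(v)\,q+r$ with $0\le r<\sigma_{2k}(v)$, the derangement condition is the requirement $r\neq 0$, while involutivity forces $r$ itself to be a multiple of $u_{0}$.

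Next I would invoke the hypothesis $u_{0}=\sigma_{2k}(v)$ and conclude. Under this equality the only multiple of $u_{0}$ lying in the range $0\le r<\sigma_{2k}(v)$ is $r=0$; hence any $u$ meeting the involutivity condition automatically satisfies $\sigma_{2k}(v)\mid u$, which renders \eqref{E:Cond3} solvable and so prevents $g$ from being a derangement. Equivalently, no value of $u$ makes $e^{u}v$ simultaneously an involution and a derangement, and therefore there is no quasipolarity with linear part $v$. I do not expect a genuine obstacle here: the content is purely combinatorial once involutivity and the derangement property have been recast as divisibility statements. The only delicate point is keeping track of the fact that ``$u$ is a multiple of $u_{0}$'' and ``$\sigma_{2k}(v)$ divides $u$'' coincide exactly when $u_{0}=\sigma_{2k}(v)$, which is precisely the source of the incompatibility.
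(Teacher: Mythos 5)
Your proof is correct and takes essentially the same route as the paper's one-line argument: involutivity forces $u_{0}\mid u$, so under the hypothesis $u_{0}=\sigma_{2k}(v)$ we get $\sigma_{2k}(v)\mid u$, making \eqref{E:Cond3} solvable and ruling out the derangement property. Your detour through the remainder $r$ in \eqref{E:PQND} is a harmless elaboration of the same divisibility observation.
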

\begin{proof}
For if $u_{0}=\sigma_{2k}(v)$, then $\sigma_{2k}(v)$ divides $u$.
\end{proof}

\begin{proof}[Proof of Theorem \ref{T:CaracQ}]
Suppose $e^{u}v$ is a quasipolarity. Then equations \eqref{E:Cond1}, \eqref{E:Cond2} hold and
\eqref{E:Cond3} has no solutions. Consequently, $u$ is a multiple of $u_{0}$. The integer $u$ guarantees that \eqref{E:Cond3}
has no solutions if, and only if, 
\[
 u = \sigma_{2k}(v) q + r
\]
for some integer $q$ and $0<r<\sigma_{2k}(v)$. This happen if, and only if, $u_{0}<\sigma_{2k}(v)$. The
inequality is sufficient because $u_{0}$ divides $\sigma_{2k}(v)$ and Proposition \ref{P:Nohay}; it is necessary because $u_{0}$ divides $r$ and
$u_{0}\leq r <\sigma_{2k}(v)$.

For the converse, let $v$ be an involution, let $u_{0}=\frac{2k}{\tau_{2k}(v)}$, and let $u$ be given by \eqref{E:PQND} with $r=u_{0}$. Thus $u$ is
a multiple of $u_{0}$, hence it is a solution of \eqref{E:Cond2}. It also ensures that \eqref{E:Cond3}
has no solutions since $u_{0}< \sigma_{2k}(\nu)$.

In particular, $u_{0}<\sigma_{2k}(v)$ is always true when $k$ is odd. Keeping in mind that $\nu$ odd because $v$ is an involution,
let $\lambda$ and $\mu$ be such that
\begin{equation}\label{E:LambdaMu}
 \nu-1 = 2\lambda,\quad \nu+1=2\mu,
\end{equation}
note that $\lambda$ and $\mu$ are coprime because
\[
 \mu-\lambda = \frac{\nu+1}{2}-\frac{\nu-1}{2} = 1.
\]

We have
\[
2km = \nu^{2}-1 = 4\lambda\mu
\]
thus
\[
 km = 2\lambda\mu
\]
and therefore $2$ divides $m$. It follows that $\lambda\mu$ is a
multiple of $k$. Using this we can show that
$\frac{k}{\gcd(\mu,k)}=\gcd(\lambda,k)$, 
which follows from the examination of the product
\begin{align*}
 \gcd(\lambda,k)\gcd(\mu,k) &= \gcd(\lambda\mu,k)\\
 &= \gcd\left(k\frac{m}{2},k\right)\\
 &= k\gcd\left(\frac{m}{2},1\right) =k.
\end{align*}

Now
\begin{equation}\label{E:Imp}
 \sigma_{2k}(v) = 2\gcd(\lambda,k) = 2\frac{k}{\gcd(\mu,k)} = 2\frac{2k}{\tau_{2k}(v)}
 = 2u_{0}.
\end{equation}

It remains to prove that $\sigma_{2k}(v) = 2u_{0}$ when $k$ is even and $\frac{2k}{\tau_{2k}(v)} < \sigma_{2k}(v)$. The number
$\frac{k}{\gcd(\lambda,k)\gcd(\mu,k)}$ is a non-zero integer and, by hypothesis,
\[
 \frac{k}{\gcd(\lambda,k)\gcd(\mu,k)} = \frac{4k}{(2\gcd(\lambda,k))(2\gcd(\mu,k))} = 2\frac{2k}{\sigma_{2k}(v)\tau_{2k}(v)} < 2
\]
which implies that $\frac{k}{\gcd(\lambda,k)\gcd(\mu,k)}=1$ and \eqref{E:Imp} is true again.
\end{proof}

\begin{proposition}\label{P:Carac}
The isotropy group of an affine quasipolarity $q=e^{u} v \in \overrightarrow{GL}(\mathbb{Z}_{2k})$ under
the conjugation action has cardinality
\[
 |\overrightarrow{GL}(\mathbb{Z}_{2k})_{g}|=\sigma_{2k}(v)\varphi(2k).
\]

Equivalently, there are $\frac{2k}{\sigma_{2k}(v)}$ elements in the orbit of $g$ under the conjugation action.
\end{proposition}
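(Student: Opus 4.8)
The plan is to compute the size of the isotropy group (centralizer) of a quasipolarity $q = e^u v$ under the conjugation action of $G = \overrightarrow{GL}(\mathbb{Z}_{2k})$, then invoke the orbit-stabilizer theorem to recover the orbit size. First I would work out the conjugation formula explicitly: for $h = e^s t \in \overrightarrow{GL}(\mathbb{Z}_{2k})$, a direct computation of $h q h^{-1}$ using the composition rule $(e^s t)(e^u v) = e^{s + tu}(tv)$ and the inverse $(e^s t)^{-1} = e^{-t^{-1}s} t^{-1}$ gives $h q h^{-1} = e^{w}(v)$ for some affine part $w$ depending on $s, t, u, v$. The linear part of the conjugate is $t v t^{-1} = v$, since $\mathbb{Z}_{2k}^\times$ is abelian, so every conjugate of $q$ has the \emph{same} linear part $v$; the orbit is therefore controlled entirely by how the affine part transforms.

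Next I would set $h q h^{-1} = q$ and extract the condition on $(s,t)$. After the computation the affine-part equation reduces to something of the form $(v-1)s = (t-1)u$ (or an equivalent congruence modulo $2k$), while $t$ ranges over all of $\mathbb{Z}_{2k}^\times$. The key observation is that the number of solutions $s \in \mathbb{Z}_{2k}$ to a linear congruence $(v-1)s \equiv c \pmod{2k}$, when it is solvable, is exactly $\gcd(v-1, 2k) = \sigma_{2k}(v)$. So I would argue that for each admissible $t$ the congruence is solvable and contributes precisely $\sigma_{2k}(v)$ values of $s$. The main obstacle I anticipate is verifying that the right-hand side $(t-1)u$ always lies in the image of multiplication by $(v-1)$, i.e. that the congruence is solvable for \emph{every} $t \in \mathbb{Z}_{2k}^\times$ rather than only some subset; this is where the characterization from Theorem~\ref{T:CaracQ} (that $u$ is a specific multiple of $u_0 = \frac{2k}{\tau_{2k}(v)}$ and that $u_0$ divides $\sigma_{2k}(v)$, by Lemma~\ref{L:Div}) must be used to guarantee the divisibility $\sigma_{2k}(v) \mid (t-1)u$ for all units $t$.

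Granting solvability for all $\varphi(2k)$ choices of $t$, the centralizer has size $\sigma_{2k}(v)\,\varphi(2k)$, which is the claimed cardinality. Finally I would apply the orbit-stabilizer theorem: the orbit size equals
\[
 \frac{|\overrightarrow{GL}(\mathbb{Z}_{2k})|}{|\overrightarrow{GL}(\mathbb{Z}_{2k})_{g}|} = \frac{2k\,\varphi(2k)}{\sigma_{2k}(v)\,\varphi(2k)} = \frac{2k}{\sigma_{2k}(v)},
\]
which is exactly the equivalent formulation stated in the proposition. The delicate point throughout is keeping the congruences consistently modulo $2k$ and using that $v$ being an involution forces $\nu$ odd (so $v - 1$ and $v + 1$ are both even), which is precisely the structural fact exploited in the proof of Theorem~\ref{T:CaracQ} and which makes the divisibility arguments go through.
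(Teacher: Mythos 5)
Your proposal follows essentially the same route as the paper: conjugate $e^u v$ by a general affine map, observe the linear part is preserved, reduce the stabilizer condition to a linear congruence in the affine part with exactly $\gcd(v-1,2k)=\sigma_{2k}(v)$ solutions for each of the $\varphi(2k)$ choices of linear part, and finish with orbit--stabilizer. You even correctly flag the one delicate point (solvability of the congruence for every unit, via $u\equiv u_0=\sigma_{2k}(v)/2$ from Theorem~\ref{T:CaracQ} and the parity of $t-1$), which the paper's own proof treats rather tersely.
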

\begin{proof}
Let $h=e^{t} s\in \overrightarrow{GL}(\mathbb{Z}_{2k})$. We have
\begin{equation}\label{E:Acc}
 g'=hg h^{-1} = (e^{t} s)(e^{u} v) (e^{-s^{-1}t} s^{-1})
 = e^{t(1-v)+su} v.
\end{equation}

The members of the stabilizer of $g$ are those whose parameters satisfy
\[
t(1-v)=u(1-s),
\]
this equation has a solution for each $s\in GL(\mathbb{Z}_{2k})$ since
$\sigma_{2k}(v) = \gcd(1-\nu,2k)$ divides $u(1-s)$ being $1-s$ is even.

In particular, the equation above has exactly $\sigma_{2k}(v)$ different solutions. Since
this occurs for each $s\in GL(\mathbb{Z}_{2k})$, the cardinality of the stabilizer
of $g$ is
\[
 |\overrightarrow{GL}(\mathbb{Z}_{2k})_{g}| = \sigma_{2k}(v)\varphi(2k).
\]

This means that each orbit has
\[
 \frac{|\overrightarrow{GL}(\mathbb{Z}_{2k})|}{\sigma_{2k}(v)\varphi(2k)} = \frac{2k}{\sigma_{2k}(v)}
\]
elements.
\end{proof}

\begin{corollary}
The group $\overrightarrow{GL}(\mathbb{Z}_{2k})$ acts transitively on the set of all
affine quasipolarities with fixed linear part.
\end{corollary}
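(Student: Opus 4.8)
The plan is to prove transitivity by a counting argument: I would show that the orbit of a single quasipolarity with linear part $v$ already exhausts the entire set of quasipolarities sharing that linear part. Two ingredients from the preceding results drive this. First, from the computation \eqref{E:Acc} in Proposition \ref{P:Carac}, conjugating $g=e^{u}v$ by $h=e^{t}s$ yields $hgh^{-1}=e^{t(1-v)+su}v$; since conjugation preserves cycle type, every element of the orbit is again an involutive derangement, i.e.\ a quasipolarity, and since $\mathbb{Z}_{2k}^{\times}$ is abelian the linear part is left unchanged. Thus the whole conjugation orbit of $g$ sits inside the set of quasipolarities with linear part $v$. Second, Proposition \ref{P:Carac} tells us this orbit has exactly $\frac{2k}{\sigma_{2k}(v)}$ elements.

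Next I would count the quasipolarities with linear part $v$ directly from Theorem \ref{T:CaracQ}. If there are none, the claim is vacuous, so assume $v$ admits at least one, whence condition (2) of the theorem holds for $v$. By condition (3), the admissible affine parts are precisely those $u\in\mathbb{Z}_{2k}$ with $u\equiv\frac{2k}{\tau_{2k}(v)}\pmod{\sigma_{2k}(v)}$. Because $\sigma_{2k}(v)=\gcd(\nu-1,2k)$ divides $2k$, the values $\sigma_{2k}(v)\,q+\frac{2k}{\tau_{2k}(v)}$ run over exactly $\frac{2k}{\sigma_{2k}(v)}$ distinct residue classes modulo $2k$ as $q$ varies. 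Hence there are precisely $\frac{2k}{\sigma_{2k}(v)}$ quasipolarities with linear part $v$.

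Comparing the two counts finishes the argument: the orbit of any quasipolarity $g=e^{u}v$ is a subset of the set of all quasipolarities with linear part $v$, and both have cardinality $\frac{2k}{\sigma_{2k}(v)}$, so they coincide. Therefore the conjugation action is transitive on this set.

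The argument is essentially bookkeeping once the two cardinalities are in hand, so I do not expect a genuine obstacle; the one point requiring care is the residue count in the second step, namely confirming that the progression $\{\sigma_{2k}(v)\,q+u_{0}\}$ realizes exactly $\frac{2k}{\sigma_{2k}(v)}$ distinct values modulo $2k$ and that each is actually attained by a quasipolarity, which is guaranteed by Theorem \ref{T:CaracQ}. Alternatively, one could give a constructive proof by solving $t(1-v)+su_{1}=u_{2}$ for $t\in\mathbb{Z}_{2k}$ and $s\in\mathbb{Z}_{2k}^{\times}$ directly, but the counting route is shorter and leans cleanly on the results already established.
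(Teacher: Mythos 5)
Your counting argument is correct and is precisely the reasoning the paper leaves implicit: the corollary is stated without proof as an immediate consequence of Proposition \ref{P:Carac}, whose orbit count $\frac{2k}{\sigma_{2k}(v)}$ matches the number of admissible affine parts $u$ supplied by Theorem \ref{T:CaracQ}, exactly as you compare them. Your verification that the orbit stays inside the set of quasipolarities with linear part $v$ (via \eqref{E:Acc} and preservation of cycle type) is the right care to take, and nothing further is needed.
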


For a $\overrightarrow{GL}(\mathbb{Z}_{2k})$-strong antichain, we have by Proposition
\ref{P:Carac}
\[
|\overrightarrow{GL}(\mathbb{Z}_{2k})_{g}| = \sigma_{2k}(v)\varphi(2k) \geq 2\varphi(2k)
\]
using the involution $v=-1$. The number of conjugacy classes of quasipolarities is bounded by $\varphi(2k)$,
thus Theorem \ref{T:Cota} gives us
\[
 |\mathcal{R}_{\overrightarrow{GL}(\mathbb{Z}_{2k})}| \leq \varphi(2k)\frac{2^{k}}{2\varphi(2k)} = 2^{k-1}.
\]

This estimate can be improved further with a better bound on the number
of involutions of $GL(\mathbb{Z}_{2k})$.


\section*{Acknowledgments}

The author wishes to thank the referee of this paper for indicating many errors and providing suggestions which led to
major improvements with respect to its first version; in particular, for contributing Lemmas \ref{L:AccPol1} and \ref{L:AccPol2} which filled
an important gap. He is also grateful to Guerino Mazzola and Jeffrey Dinitz, who provided both advice and encouragement.
Last, but not least, he expresses his profound gratitude to Teresa Mary Joselyn for checking the English of this paper.

\end{document}